	\newtheorem{thm}{Theorem}
	\newtheorem{cor}[thm]{Corollary}
	\newtheorem{ex}[thm]{Example}
	\newtheorem{Def}[thm]{Definition}
		\newtheorem{prop}[thm]{Proposition}
		\newtheorem{rem}[thm]{Remark}
		\newcommand{\Rem}{\begin{rem} \rm}
			\newcommand{\bdfn}{\begin{Def} \rm}
				\newcommand{\edfn}{\end{Def}}
\begin{document}
				\large
				\title[   Quotient Lifting Properties]{ Order preserving Quotient Lifting Properties }
			
								\author[T. S. S. R. K. Rao]{T. S. S. R .K. Rao}
				
				\address{ Adjunct Professor, CARAMS, MAHE, Manipal, India  }
				\email{srin@fulbrightmail.org }
				\subjclass[2000]{Primary 46B40, 47L05, 46G10, 46B25  \\
					\noindent
\textit{Keywords and phrases.} Quotient lifting properties; Compact and order preserving Quotient lifting properties; Choquet simplexes, Banach lattices}

\maketitle
\vskip 1em

\begin{abstract}  Several  properties of subspaces $J \subset X$ in a pair $(X,J)$ with quotient lifting properties of different types are derived, giving emphasis to
order-preserving properties for order unit spaces and spaces of affine continuous functions on Choquet simplexes.   \end{abstract}
\vskip 1em
\section{Introduction}
The Quotient Lifting Property ($QLP$) for pairs of Banach spaces deals with the existence of norm preserving lifts of operators.  		
\begin{Def} ( \cite{mbf}) Let $X$ be a real Banach space and $J$ be a closed subspace of  $X$. The pair $(X,J)$ has the QLP if and only if for every Banach space $Y$ and every  bounded operator  $S: Y \to X/J$ there exists  a bounded operator $T$ from $Y$ to $X$ lifting $S$ while preserving the norm, i.e. $\|T\|=\|S\|$ and $\pi \circ T = S$, where $\pi: X \rightarrow X/J$ is the quotient map.
	\end{Def} 	
 If $(X,J)$ has the QLP then   $J$ is the kernel of a contractive projection and hence a  complemented subspace of $X$, see \cite{mbf}. 
\vskip 1em

 Motivated by this, we introduce a related and weaker property involving the lifting of non-negative bounded linear operators to have non-negative, norm-preserving lifting in the class of order unit spaces (see \cite{A} and \cite{H}, Chapter 1) and compact  operators to compact operators with the same norm. We also give conditions under which this property is preserved by subspaces and quotient spaces.
 \vskip 1em

 Let $K$ be a compact convex set in a locally convex topological vector space and let $A(K)$ denote the space of real-valued affine continuous functions on $K$, equipped with the supremum norm and point-wise order. For a closed split face $F \subset K$, if $J = \{a \in A(K): a(F)=0\}$, we show that for the pair $(A(K),J)$, the order preserving, linear quotient lifting property is equivalent to $F$ being the range of an affine continuous map on $K$, which is the identity on $F$.  We give examples of subspaces $J$ of  $A(K)$, for a Choquet simplex $K$, so that $(A(K),J)$ has the compact extension property , using vector-valued Choquet theory. Our approach follows the popular route of converting the lifting problem to a vector-valued extension problem.
 \vskip 1em
 
 The general question about how lifting properties behave with respect to injective tensor products, among various categories of Banach spaces, has not received much attention.  Unlike the QLP, for the compact quotient lifting property (CQLP), the subspace may not be complemented, making the passage to tensor products more difficult.  

\vskip 1em
Let $\Omega$ be a compact Hausdorff space and let $C(\Omega)$ denote the space of continuous functions on $\Omega$. We conclude the paper with an application of a remarkable result of Aviles and Troyanski relating the existence of uniformly rotund in every direction (URED) renorming of $C(\Omega)$ to that of a strictly positive measure supported on $\Omega$, to lifting lattice homomorphism preserving operators to $C(\Omega)$.

\section{Various forms of lifting properties}

The  results of this  section show an interdependence between  geometric properties of a Banach spaces and the lifting properties. We also give results showing  how these properties behave with respect to order ideals or lattice ideals, in specific function spaces.

\vskip 1em
Let $J \subset X$ be a closed subspace, we say that $(X,J)$ has the compact lifting property (CQLP), if for all Banach spaces $Z$ and compact linear operators,
$T: Z \rightarrow X/J$, there is a compact linear operator $S: Z \rightarrow X$ such that $\pi \circ S = T$ and $\|T\|=\|S\|$.
\vskip 1em

Clearly if $(X,J)$ has the QLP, since the lifting is via composition with the associated projection $P$, QLP implies the CQLP. More precisely, we recall from \cite{mbf} that $P$ is the projection defined by the composition of  $\pi : X \rightarrow X/J$ followed by $\tilde{Id}$, which is the norm-preserving lift
 of the identity map, $I: X/J \rightarrow X/J$. Also if $J$ is of finite codimension, then the CQLP implies the QLP.

\begin{ex}\label{ex?} Let $\Omega$ be a compact Hausdorff space and let $C(\Omega,X)$ be the space of $X$-valued continuous functions, equipped with the supremum norm.
We recall the identification of the space of compact operators ${\mathcal K}(X, C(\Omega))$ as the space  $C(\Omega,X^\ast)$ via the  embedding $T \rightarrow T^\ast \circ \delta$ from $\Omega $ into $X^\ast$ ( $\delta:\Omega \rightarrow C(\Omega)^\ast$ is the evaluation map) is an onto isometry.
For a closed set $E \subset \Omega$,
 if $J = \{f \in C(\Omega): f(E) = 0\}$, note that $C(\Omega)/J$ can be identified via the `restriction' map with $C(E)$,  and  hence
 $\mathcal{K}(X,C(\Omega)/J)$ is  identified with
 $C(E, X^\ast)$, an application of  Dugundji's extension theorem, implies that $(C(\Omega), J)$ has the CQLP.

\end{ex}

\vskip 1em
Given an infinite discrete set $\Gamma$, using the identifications of $\ell^\infty (\Gamma)$ with  $C(\beta(\Gamma))$ and  $c_0(\Gamma) $ with $ \{f \in C(\beta(\Gamma)): f(\beta(\Gamma)-\Gamma) = 0\}$, we have  that $(\ell^\infty(\Gamma), c_0(\Gamma))$ has the CQLP, but not the QLP as $c_0(\Gamma)$ is not even a complemented subspace of $\ell^\infty(\Gamma)$.

\vskip 1em
In the next proposition we give an application of the CQLP.

\begin{prop}
If  $(X,J)$ has the CQLP,  let $\{\pi(x_n)\}_{n \geq 1}$ be a relatively compact sequence in $X/J$. Then there is a relatively compact sequence $\{z_n\}_{n \geq 1} \subset X$ having the same bound as $\{\pi(x_n)\}_{n \geq 1}$ such that $\pi(z_n)= \pi(x_n)$ for all $n$.
\end{prop}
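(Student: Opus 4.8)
The plan is to realize the given relatively compact sequence as the action of a single compact operator on the unit vector basis of $\ell^1$, to lift that operator via the CQLP, and then to read off the desired sequence from the images of the basis vectors. Write $M=\sup_{n\ge 1}\|\pi(x_n)\|$, which is finite because a relatively compact subset of a Banach space is bounded. Define $T:\ell^1\to X/J$ on the canonical basis $\{e_n\}_{n\ge 1}$ by $T(e_n)=\pi(x_n)$ and extend linearly; for $(a_n)\in \ell^1$ the series $\sum_n a_n\pi(x_n)$ converges absolutely, being dominated by $M\sum_n|a_n|$, so $T$ is a well-defined bounded operator with $\|T\|\le M$, and $\|T(e_n)\|=\|\pi(x_n)\|$ then forces $\|T\|=M$.

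The one point that needs an argument is that $T$ is compact. The set $D_0=\{\,\pm\pi(x_n):n\ge 1\,\}\cup\{0\}$ is relatively compact in $X/J$, since relative compactness is preserved by the homeomorphism $y\mapsto -y$, by adjoining a point, and by finite unions. By Mazur's theorem the closed convex hull $C$ of $D_0$ is norm compact, and since each $T((a_n))=\sum_n a_n\pi(x_n)$ with $\sum_n|a_n|\le 1$ is a norm limit of convex combinations of elements of $D_0$ and hence lies in $C$, we have $T(B_{\ell^1})\subset C$; therefore $T$ is a compact operator.

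Now I would apply the CQLP to $T$: there is a compact operator $S:\ell^1\to X$ with $\pi\circ S=T$ and $\|S\|=\|T\|=M$. Put $z_n=S(e_n)$. Then $\pi(z_n)=T(e_n)=\pi(x_n)$ for every $n$; the set $\{z_n:n\ge 1\}$ lies in $S(B_{\ell^1})$, which is relatively compact because $S$ is compact, so $\{z_n\}_{n\ge 1}$ is relatively compact; and $\|\pi(x_n)\|=\|\pi(z_n)\|\le\|z_n\|\le\|S\|=M$, whence $\sup_n\|z_n\|=M=\sup_n\|\pi(x_n)\|$, i.e. $\{z_n\}$ has the same bound as $\{\pi(x_n)\}$. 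The only ingredient beyond routine bookkeeping is the compactness of $T$ via Mazur's theorem; the rest is just the CQLP applied to the natural operator, so I do not anticipate a genuine obstacle here.
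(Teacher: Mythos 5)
Your proposal is correct and follows exactly the paper's argument: realize the sequence as the image of the $\ell^1$ basis under a compact operator, lift via the CQLP, and set $z_n=S(e_n)$. The paper leaves the compactness of $T$ and the final bound as ``easy to see''; your Mazur's theorem argument and the norm bookkeeping supply precisely those details.
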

\begin{proof}
 It is easy to see that the canonical bounded linear map $T: \ell^1 \rightarrow X/J$ defined by $T(e_i) = \pi(x_i)$ (where $e_i$ is the canonical basis in $\ell^1$) is a compact operator. Now by hypothesis, we have $S: \ell^1 \rightarrow X$ a compact operator with $\pi \circ S = T$, $\|S\|=\|T\|$. It is easy to see that $\{z_i=S(e_i)\}_{i \geq 1}$ is the required sequence in $ X$.
\end{proof}
Our next two results deal with the stability of the property CQLP. In the following proposition we use the canonical embedding of a space in its bidual. In particular, $(X/J)^{\ast\ast} = X^{\ast\ast}/J^{\bot\bot}$ and the quotient map, $\pi: X^{\ast\ast} \rightarrow X^{\ast\ast}/J^{\bot\bot}$ is the biadjoint of the corresponding quotient map from $X \rightarrow X/J$, still denoted by $\pi$. If $J \subset X$ is a reflexive space, then $J^{\bot\bot} = J$. We recall that $J$ is said to be factor reflexive space, if $X/J$ is a reflexive space.
\begin{prop}
\begin{enumerate}
	\item Let $ J \subset Y \subset X$. Suppose $(X,J)$ has the CQLP. Then $(Y,J)$ has the CQLP.
	\item  If $J$ is factor reflexive and $(X,J)$ has the CQLP, then $(X^{\ast\ast},J^{\bot\bot})$ has the CQLP.
	\item
	If $J$ is reflexive, then $(X^{\ast\ast},J)$ has the CQLP implies $(X,J)$ has the CQLP.

\end{enumerate} 	
\end{prop}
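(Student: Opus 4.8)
The plan is to reduce all three statements to two elementary facts. First, a bounded operator whose range lies in a \emph{closed} subspace of its codomain is compact as an operator into that subspace exactly when it is compact into the larger space. Second, the identifications recalled just before the statement: $(X/J)^{\ast\ast} = X^{\ast\ast}/J^{\bot\bot}$ isometrically, with $\pi : X^{\ast\ast} \to X^{\ast\ast}/J^{\bot\bot}$ the biadjoint of $\pi : X \to X/J$, so that on the canonical image of $X$ the biadjoint quotient map agrees with $\iota \circ \pi$, where $\iota : X/J \to (X/J)^{\ast\ast}$ is the canonical embedding. The computational heart of each part is the observation that \emph{any} lift of a suitably co-restricted operator is forced to take values in the smaller space, precisely because $J$ sits inside that space.

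For (1): since $J \subset Y \subset X$, the quotient norm on $Y/J$ is the restriction of that on $X/J$ (for $y\in Y$ both equal $\inf_{j\in J}\|y-j\|$), so $Y/J$ is a closed subspace of $X/J$. Given a compact $T : Z \to Y/J$, view it as a compact operator into $X/J$ and apply the CQLP of $(X,J)$ to get a compact $S : Z \to X$ with $\pi S = T$ and $\|S\| = \|T\|$. For every $z$, $\pi(S(z)) = T(z) \in Y/J$, and $\pi^{-1}(Y/J) = Y + J = Y$ because $J \subset Y$; hence $S$ maps into the closed subspace $Y$, so it is compact into $Y$ and is the required lift. For (2): factor reflexivity gives $(X/J)^{\ast\ast} = X/J$, hence $X^{\ast\ast}/J^{\bot\bot} = X/J$, and on the image of $X$ the map $\pi : X^{\ast\ast} \to X^{\ast\ast}/J^{\bot\bot}$ becomes the original $\pi : X \to X/J$. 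Given a compact $T : Z \to X^{\ast\ast}/J^{\bot\bot} = X/J$, use the CQLP of $(X,J)$ to get a compact $S_0 : Z \to X$ lifting $T$ with $\|S_0\| = \|T\|$, and let $S$ be $S_0$ followed by the isometric inclusion $X \hookrightarrow X^{\ast\ast}$; then $S$ is compact, $\|S\| = \|T\|$, and $\pi S = T$, so $(X^{\ast\ast}, J^{\bot\bot})$ has the CQLP.

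For (3): reflexivity of $J$ gives $J^{\bot\bot} = J$, so $X^{\ast\ast}/J$ makes sense and equals $(X/J)^{\ast\ast}$, with $\pi : X^{\ast\ast} \to X^{\ast\ast}/J$ restricting on $X$ to $\iota \circ \pi$. Given a compact $T : Z \to X/J$, the operator $\iota \circ T : Z \to X^{\ast\ast}/J$ is compact of the same norm, so by the CQLP of $(X^{\ast\ast}, J)$ there is a compact $S' : Z \to X^{\ast\ast}$ with $\pi S' = \iota T$ and $\|S'\| = \|T\|$. Since $\pi(S'(z)) = \iota(T(z))$ lies in the image of $X/J$ inside $X^{\ast\ast}/J$ and the $\pi$-preimage of that image is $X$ (again using $J \subset X$), $S'$ takes values in the closed subspace $X$; hence $S := S'$ is compact into $X$, and $\iota \pi S = \iota T$ together with injectivity of $\iota$ yields $\pi S = T$. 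So $(X,J)$ has the CQLP.

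I do not expect a genuine obstacle: every step is soft functional analysis. The only points needing attention are the bookkeeping with the canonical embeddings and the biadjoint quotient map, and the justification of the two preimage identities $\pi^{-1}(Y/J) = Y$ and $\pi^{-1}(\iota(X/J)) = X$, both of which hinge on $J$ being contained in the space where the lift is required to land. The reflexivity hypotheses enter only to make the quotients of $X^{\ast\ast}$ in (2) and (3) coincide with biduals of $X/J$, which is exactly what allows the lifting problems to be transported between $X$ and $X^{\ast\ast}$ isometrically.
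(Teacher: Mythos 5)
Your proposal is correct and follows essentially the same route as the paper: lift through the larger space and use $\pi^{-1}(Y/J)=Y+J=Y$ to force the lift into the smaller one for (1), and transport the problem via the identification $X^{\ast\ast}/J^{\bot\bot}=(X/J)^{\ast\ast}$ for (2) and (3). The only cosmetic difference is that for (3) you rerun the argument of (1) directly on the chain $J\subset X\subset X^{\ast\ast}$, whereas the paper simply cites part (1).
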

\begin{proof}
	Let $T: Z \rightarrow Y/J \subset X/J$ be a compact operator. By hypothesis, there is a compact operator $S: Z \rightarrow X$ such that $\pi \circ S = T$ and $\|S\| = \|T\|$. 
	 For any $z \in Z$, $\pi(S(z))=T(z) = \pi(y)$, for some $y \in Y$. Hence $S(z) -y \in J$, so that $S(z) \in Y$. The conclusion follows.
	\vskip 1em
	Now suppose $J$ is factor reflexive and $(X,J)$ has the CQLP. Let $T: Z \rightarrow X^{\ast\ast}/J^{\bot\bot} = X/J$ be a compact operator. By hypothesis, there is a compact operator, $S: Z \rightarrow X$ such that $\pi \circ S = T$ and $\|S\|=\|T\|$. The conclusion in (2) follows in view of our remark about quotient maps.
	\vskip 1em
	When $J$ is reflexive, in view of the inclusions $J \subset X \subset X^{\ast\ast}$, the conclusion in (3) follows from (1).
\end{proof}
In the following proposition, we use the notation $\pi$ for several quotient spaces, with appropriate interpretation.

\begin{prop}
Let $Z \subset Y \subset X$	be Banach spaces. Suppose $Y$ is of finite codimension and $(X,Y)$ has the CQLP. Then $(X/Z,Y/Z)$ has the CQLP.
\end{prop}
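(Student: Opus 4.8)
The plan is to reduce the assertion to the compact lifting property already assumed for the pair $(X,Y)$, by means of the canonical isometric identification $(X/Z)/(Y/Z) \cong X/Y$ valid for closed subspaces $Z \subset Y \subset X$ (the metric form of the third isomorphism theorem). First I would fix notation: write $q : X \to X/Z$ and $\pi' : X/Z \to (X/Z)/(Y/Z)$ for the two quotient maps, and observe that $\pi' \circ q : X \to (X/Z)/(Y/Z)$ is itself a metric quotient map with kernel exactly $Y$ (this is where $Z \subset Y$ enters), so that under the identification above $\pi' \circ q$ becomes the quotient map $\pi : X \to X/Y$. This is the only place where one has to be a little careful, and it is the step I expect to be the main obstacle --- not because it is deep, but because the bookkeeping with iterated quotients, and the check that all the maps compose compatibly with the norms, has to be carried out cleanly.

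Granting this, let $W$ be an arbitrary Banach space and let $T : W \to (X/Z)/(Y/Z)$ be a compact operator, which I regard as a compact operator $T : W \to X/Y$. Since $(X,Y)$ has the CQLP, there is a compact operator $\tilde S : W \to X$ with $\pi \circ \tilde S = T$ and $\|\tilde S\| = \|T\|$. I would then push this lift down to $X/Z$ by setting $S := q \circ \tilde S : W \to X/Z$. Then $S$ is compact, being the composition of a compact operator with a bounded one, and $\pi' \circ S = (\pi' \circ q) \circ \tilde S = \pi \circ \tilde S = T$, so $S$ lifts $T$.

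For the norm: on one hand $\|S\| = \|q \circ \tilde S\| \le \|q\|\,\|\tilde S\| \le \|T\|$ since $\|q\| \le 1$; on the other hand $\|T\| = \|\pi' \circ S\| \le \|\pi'\|\,\|S\| \le \|S\|$. Hence $\|S\| = \|T\|$, and $(X/Z,Y/Z)$ has the CQLP.

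A final word on the hypotheses. The push-down argument above in fact works whenever $(X,Y)$ has the CQLP; the assumption that $Y$ has finite codimension (hence that $Y/Z$ has finite codimension in $X/Z$) lets one, if desired, recast the goal via the remark of this section, according to which the CQLP and the QLP coincide for finite-codimensional subspaces --- so it would suffice to lift arbitrary bounded, not merely compact, operators into $X/Y$, and the identical construction does this. In either formulation the substantive point is the identification $(X/Z)/(Y/Z) \cong X/Y$ together with the compatibility $\pi' \circ q = \pi$, after which the proof is routine.
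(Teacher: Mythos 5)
Your proof is correct, but it takes a genuinely different route from the paper's. The paper uses the finite-codimension hypothesis at the outset: since $X/Y$ is finite dimensional, the identity $I:X/Y\to X/Y$ is compact, so the CQLP yields a norm-one compact lift $I':X/Y\to X$; this produces a contractive projection $P=I'\circ\pi$ on $X$ with $\ker(P)=Y$, which (because $Z\subset Y$) descends to a contractive projection $P'$ on $X/Z$ with $\ker(P')=Y/Z$, and the CQLP for $(X/Z,Y/Z)$ then follows from the remark at the start of Section 2 that a norm-one lift of the identity gives liftings by composition. You instead lift each compact operator individually: using the isometric identification $(X/Z)/(Y/Z)\cong X/Y$ and the compatibility $\pi'\circ q=\pi$, you pull a compact $T$ up to $X$ via the CQLP of $(X,Y)$ and push the lift down through $q$, with the norm equality falling out of the two contractivity estimates. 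Your argument is more elementary and, as you correctly observe, does not use the finite codimension of $Y$ at all, so it proves a more general statement; what the paper's route buys is the explicit contractive projection on $X/Z$ with kernel $Y/Z$, i.e., it directly exhibits the structure that gives $(X/Z,Y/Z)$ the full QLP rather than only the compact version (though under the stated hypotheses the two coincide, since $Y/Z$ is again of finite codimension). Your closing remark about recasting the goal via the QLP is dispensable but not wrong.
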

\begin{proof}
Since $I: X/Y \rightarrow X/Y$	is a compact operator, by hypothesis, there is a compact operator $I': X/Y \rightarrow X$ such that $\pi \circ I'=I$ and $\|I'\|=1$. Now $P: X \rightarrow X$ defined by $P(x)=I'(\pi(x))$ is a contractive projection with $ker(P)=Y$. Define $P': X/Z \rightarrow X/Z$ by $P'(\pi(x))= \pi(P(x))$. If $\pi(x_1)=\pi(x_2)$, then $x_1-x_2 \in Z \subset Y$, so $P(x_1)=P(x_2)$. Thus $P'$ is well defined. It is routine to check that it is a contractive projection. If $y \in Y$, then $P'(\pi(y))= \pi(0)$, so $Y/Z \subset ker(P')$. Conversely if 
$P'(\pi(x))=0$, then $\pi(x)= \pi(x-P(x))$, as $(x-P(x)) \in Y$, we get $\pi(x) \in Y/Z$. Hence $ker(P')= Y/Z$. Now it follows from our remarks at the beginning of Section 2, that $(X/Z,Y/Z)$ has the CQLP.
\end{proof}

\vskip 1em 
	Let $X$ be an order unit space and let $J\subset X$ be a proper order ideal. Now for the order unit space $X/J$, we can consider the pair $(X,J)$ having the order preserving quotient lifting property (OQLP), if for any order unit space $Z$ and for any linear, bounded, order unit preserving operator $T: Z \rightarrow X/J$, there is a linear bounded, order unit preserving $S: Z \rightarrow X$ such that $\pi \circ S = T$ and $\|T\|=\|S\|$.
	\vskip 1em
	When the pair $(X,J)$ has an appropriate lattice structure, a similar lifting property can be defined. We will be implicitly using this without giving a formal definition. A version of Propositions 4 and 5 can be contemplated, when $X$ is also an ordered ideal in its bidual or $X/Y$ is an order unit space.
	\vskip 1em
	
In what follows we will be using the notations and terminology of convexity theory and order unit spaces from Chapters I and II of Alfsen's monograph. See also Section 18 of \cite{H}. We illustrate situations where apart from quotient lifting, one can also achieve `lifts' that preserve the order structure.

\vskip 1em

Let $K$ be a compact convex set in a locally convex topological vector space.  Let $F \subset K$ be a closed, extreme convex set (face) and let $\phi: K \rightarrow F$ be an affine continuous map which is the identity on $F$. It is easy to see that $P: A(K) \rightarrow A(K)$ defined by $P(a) = a \circ \phi$ is a positive, contractive linear projection such that $P(1) = 1$, $ker(P) = \{a \in A(K):a(F)=0\}$ and it is an order ideal in $A(K)$. It is also easy to see that the canonical map, $\Phi: A(K)/ker(P) \rightarrow A(K)$ defined by $\Phi(\pi(a))= P(a)$ is an order-preserving linear contraction. Now for any order unit space $E$, with order unit $e$, if $T: E \rightarrow A(K)/ker(P)$ is a linear, continuous, order-preserving map with $T(e) = \pi(1)$, then it is easy to see that $T' = \Phi \circ T $ is an order-preserving, norm-preserving quotient lifting.
\vskip 1em
In the following theorem we preserve the notation from above.

\begin{thm} Let $K$ be a compact convex set and let $F \subset K$ be a closed split face. Suppose for all order unit spaces $E$ and for all non-negative, bounded linear operators $T: E \rightarrow A(K)/J$, there is a non-negative bounded linear lifting $T': E \rightarrow A(K)$
which is also norm preserving. There exists an affine continuous map $\phi: K \rightarrow F$ such that $\phi$ is the identity on $F$. Hence $(A(K),J)$ has the QLP.

\end{thm}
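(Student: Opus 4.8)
The plan is to convert the hypothesized lifting property into the concrete construction of the retraction $\phi: K \to F$ by feeding the hypothesis a well-chosen operator and an order unit space $E$ whose positive functionals carry the relevant geometric data. Since $F$ is a closed split face, $K$ decomposes as a direct convex sum of $F$ and the complementary split face $F'$, and every $a \in A(K)$ restricts to $A(F)$; moreover the restriction map $A(K) \to A(F)$ is onto (this is the standard fact that affine continuous functions extend off a closed split face), with kernel exactly $J = \{a \in A(K): a(F)=0\}$. Thus $A(K)/J \cong A(F)$ as order unit spaces, the order unit $\pi(1)$ corresponding to the constant function $1$ on $F$.

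The first concrete step is to take $E = A(F)$ and $T : A(F) \to A(K)/J \cong A(F)$ the identity map, which is linear, bounded, order-unit preserving (hence non-negative) of norm one. By hypothesis there is a non-negative, norm-preserving lift $T' : A(F) \to A(K)$ with $\pi \circ T' = \mathrm{id}$. The second step is to read off what $T'$ does geometrically: for each $k \in K$, the composition $a \mapsto (T'a)(k)$ is a positive linear functional on $A(F)$ of norm $\le \|T'\| = 1$ sending $1 \in A(F)$ to $(T'1)(k)$. Because $\pi \circ T' = \mathrm{id}$ we have $T'(1) - 1 \in J$, so $(T'1)(k) = 1$ for $k \in F$; one must check $(T'1) = 1$ on all of $K$, which follows since $T'$ norm-preserving and positive with $T'(1) \ge 0$, $\|T'(1)\| = \|1\| = 1$ forces $0 \le T'(1) \le 1$, and combined with $(T'1)|_F = 1$ and the split-face structure one gets $T'(1) \equiv 1$. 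Hence $a \mapsto (T'a)(k)$ is a state on $A(F)$, i.e. an element $\phi(k)$ of the state space of $A(F)$; since $F$ is a simplex-like compact convex set embedded as the state space of $A(F)$ via evaluation (here using that $F$ is a closed face of $K$, so $A(F)$ has $F$ as its state space), we get $\phi(k) \in F$. The map $\phi : K \to F$ defined this way is affine and continuous (continuity in the relevant topologies because $T'$ maps into $A(K)$ and evaluations are continuous), and for $k \in F$, $(T'a)(k) = (\pi(T'a))(k) = a(k)$, so $\phi(k) = k$; thus $\phi$ is a retraction of $K$ onto $F$.

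The final step invokes the paragraph immediately preceding the theorem: once such an affine continuous retraction $\phi : K \to F$ exists, $P(a) = a \circ \phi$ is a positive contractive projection on $A(K)$ with kernel $J$, and the induced map $\Phi : A(K)/J \to A(K)$, $\Phi(\pi(a)) = P(a)$, is an order-preserving contraction splitting $\pi$; composing an arbitrary norm-one lift problem with $\Phi$ shows $(A(K),J)$ has the QLP (for a general bounded $S : Y \to A(K)/J$, the lift is $\Phi \circ S$, which is norm-preserving since $\|\Phi\| \le 1$ and $\pi \circ \Phi = \mathrm{id}$ force $\|S\| \le \|\Phi \circ S\| \le \|S\|$).

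**Main obstacle.** The delicate point is identifying the lifted functionals $a \mapsto (T'a)(k)$ as \emph{states} living on $F$ rather than merely as positive functionals of norm $\le 1$ on $A(F)$; this requires pinning down $T'(1) \equiv 1$ on all of $K$ (not just on $F$) and then using that the state space of $A(F)$ is affinely homeomorphic to $F$ via evaluation — which in turn leans on $F$ being a closed split face so that $A(K)|_F = A(F)$ and $F$ sits as a closed face of the state space of $A(K)$. I expect the write-up to spend most of its effort here; the construction of $\phi$ and the reduction to QLP afterwards are then routine given the discussion preceding the theorem.
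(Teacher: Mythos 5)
Your overall route is the same as the paper's: apply the hypothesis to the identity map $A(F)\cong A(K)/J\to A(K)/J$, obtain a positive norm-preserving lift $T'$, and read off $\phi$ as $k\mapsto (T')^{\ast}(\delta(k))$, a point of the state space of $A(F)$, i.e.\ of $F$; the passage to the QLP via $P(a)=a\circ\phi$ is exactly the paper's preceding discussion. You also correctly isolate the crux, namely that $\phi(k)$ must be a genuine state, which requires $(T'1)(k)=1$ for every $k\in K$ and not just for $k\in F$.

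However, your justification of $T'(1)\equiv 1$ does not work. From positivity and $\|T'\|=1$ you only get $0\le T'1\le 1$ together with $(T'1)|_F=1$, and the split-face structure does not force $T'1\equiv1$: take $K=[0,1]$, $F=\{0\}$, so $A(F)=\mathbb{R}$, and $T'(c)=c\,g$ with $g(t)=1-t/2$. This $T'$ is positive, norm-preserving and lifts the identity, yet $T'1=g\not\equiv1$, and the corresponding $\phi(t)$ is the functional $c\mapsto c\,g(t)$, which for $t>0$ has norm $<1$ and so does not correspond to a point of $F$. (The paper's own proof silently treats the lift as order-unit preserving, so you are wrestling with a point it glosses over.) The gap is repairable: replace $T'$ by $S(a)=T'(a)+(1-T'1)\,\mu(a)$ for any fixed state $\mu$ of $A(F)$. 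Then $S$ is positive, $S(1)=1$ (hence $\|S\|=\|S(1)\|=1$), and $S(a)|_F=T'(a)|_F=a$, so $S$ is a unital positive norm-preserving lift and your argument goes through with $S$ in place of $T'$. Without some such modification (or without strengthening the hypothesis to order-unit-preserving lifts, as in the paper's definition of the OQLP), the step ``$\phi(k)\in F$'' fails.
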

\begin{proof}
	Since $F$ is a closed split face, we use the identification of $A(K)/J$ with $A(F)$ via the map, $\pi(a) \rightarrow a|F$ both as an isometry and order-preserving map. Now the identity map $I: A(K)/J \rightarrow A(K)/J$ is a linear order-preserving map and hence by hypothesis, admits a norm-preserving, order-preserving lifting $I': A(F) \rightarrow A(K)$.
	\vskip 1em
	We next recall that $K$ can be identified with the state space $\{\Lambda \in A(K)^\ast: \|\Lambda\|=\Lambda(1)=1\}$, equipped with the weak$^\ast$-topology,  via the affine homeomorphism $k \rightarrow \delta(k)$ where $\delta(k)$ is the evaluation map.
	Now since $A(K)/J=A(F)$ is an order unit space, applying the hypothesis to $I: A(F) \rightarrow A(K)/J$, let $I': A(F) \rightarrow A(K)$ be the order unit , norm preserving quotient lift. It is easy to see that using implicitly the canonical embeddings noted above, $(I')^\ast|K \rightarrow F$ is the required continuous, affine map. The quotient lifting property implies that this is the identity on $F$.
\end{proof}
We next prove a vector-valued  affine continuous function version of Example 2. This requires some knowledge of Choquet simplexes from Alfsen's monograph
and a vector-valued integral representation analogue
of Singer's theorem (see \cite{DU}), due to Saab \cite{Sa}. See also \cite{DU} for general theory of vector measures and tensor product theory. For a Choquet simplex $K$, we first establish the canonical isometry between $A(K,X^\ast)$, the injective tensor product space $A(K) \otimes_{\epsilon}X^\ast$ and ${\mathcal K}(X,A(K))$.

\vskip 1em
Let $K$ be a compact Choquet simplex and let $F \subset K$ be a metrizable closed face. Since $K$ is a simplex, $F$ is a split face and  also a Choquet simplex (see Chapters II and III of \cite{A}). Note that the set of extreme points $\partial_e F = \partial_e K \cap F$. Since $F$ is metrizable, $\partial_e F$ is a $G_{\delta}$-subset of $F$ and as $F$ is closed, $\partial_e F$, is a Borel set in $K$.

\vskip 1em
Since $K$ is a simplex, $A(K)$ has the metric approximation property (as $A(K)^\ast$ is isometric to a $L^1(\mu)$ space, see \cite{H} Section 18) , consequently for any Banach space $X$,  ${\mathcal K}(X,A(K))$ can be canonically identified with the injective tensor product space, $A(K) \otimes_{\epsilon}X^\ast$.
\vskip 1em
Let $\delta: K \rightarrow A(K)^\ast$ be the canonical evaluation map, which is an affine homeomorphism onto $\delta(K)$ when the latter is equipped with the weak$^\ast$-topology. Now as in the case of vector-valued continuous functions,
for $T \in {\mathcal K}(X,A(K))$, $ T^\ast \circ \delta : K \rightarrow X^\ast \in A(K,X^\ast)$ and conversely for any $a \in A(K,X^\ast)$, $T: X \rightarrow A(K)$ defined by $T(x) = x \circ a$ for $x \in X$ is a compact operator. It is easy to see that this association is an onto isometry between ${\mathcal K}(X,A(K))$ and $A(K,X^\ast)$. In particular we have identified
$A(K)\otimes_{\epsilon} X^\ast$ and $A(K,X^\ast)$. 
\vskip 1em
In what follows these identifications will be used implicitly. For a Banach space $X$ we recall that $X^\ast_1$ denotes the closed unit ball with the weak$^\ast$-topology. We recall that for $k \in \partial_e K$ and $e^{\ast\ast} \in \partial_e X^{\ast\ast}_1$, $\delta(k)e^{\ast\ast}$ denotes the extreme functional or boundary measure (in the sense in \cite{Sa}), $(\delta(k)e^{\ast\ast} )(a) = e^{\ast\ast}(a(k))$ for $a \in A(K,X^\ast)$.
\begin{thm} Let $K$ be a Choquet simplex and let $F \subset K$ be a closed metrizable face. Let $J = \{a \in A(K): a(F)=0\}$. Then $(A(K),J)$ has the CQLP.
\end{thm}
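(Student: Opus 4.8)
The plan is to follow the route advertised just before the statement: turn the compact lifting problem into a vector-valued affine extension problem, and then solve that extension problem with the Choquet structure of the simplex and Saab's integral representation. First I would set up the reduction. Since $K$ is a simplex, $A(K)$ has the metric approximation property, so for every Banach space $Z$ the identifications recorded above give $\mathcal{K}(Z,A(K))\cong A(K)\otimes_\epsilon Z^\ast\cong A(K,Z^\ast)$, a compact operator corresponding to an affine $w^\ast$-continuous $Z^\ast$-valued function on $K$ with relatively norm-compact range. Because $K$ is a simplex the closed face $F$ is automatically a split face, so restriction identifies $A(K)/J$ with $A(F)$ both isometrically and as ordered spaces; $F$ is again a simplex, hence likewise $\mathcal{K}(Z,A(F))\cong A(F,Z^\ast)$. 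Under these identifications a compact $T\colon Z\to A(K)/J$ becomes a function $b\in A(F,Z^\ast)$ with relatively compact range and $\|T\|=\sup_{x\in F}\|b(x)\|$, and a compact norm-preserving lift of $T$ is exactly an extension $a\in A(K,Z^\ast)$ of $b$ with relatively compact range and $\sup_{k\in K}\|a(k)\|=\sup_{x\in F}\|b(x)\|$. So it suffices to produce such an $a$ for an arbitrary such $b$.

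Next I would build the extension from the boundary. Since $F$ is metrizable, $\partial_e F=\partial_e K\cap F$ is a Borel subset of $K$, and, $K$ being a simplex, each $k\in K$ has a unique maximal representing measure $\mu_k$, with $k\mapsto\mu_k$ affine. I would take
\[
a(k)=\int_{\partial_e F}b\,d\mu_k\in Z^\ast ,
\]
a Bochner integral that is legitimate because $b|_{\partial_e F}$ is bounded, Borel, and has relatively compact (hence separable) range. Then $a$ is affine (affinity of $k\mapsto\mu_k$); $\|a(k)\|\le\mu_k(\partial_e F)\sup_F\|b\|\le\sup_F\|b\|$; and $a(K)\subseteq[0,1]\cdot\overline{\mathrm{co}}\bigl(b(\partial_e F)\cup\{0\}\bigr)$, which is norm-compact. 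For $k\in F$, the face $F$ being closed forces $\mu_k$ to be concentrated on $\partial_e F$ and to represent $k$, so the vector-valued barycentric identity gives $a(k)=b(k)$; this is precisely the point at which Saab's affine vector-valued analogue of Singer's theorem is invoked, to guarantee that the Pettis/Bochner integral represents $b$ for $Z^\ast$-valued affine functions with relatively compact range. Hence $a$ extends $b$ with $\sup_K\|a\|=\sup_F\|b\|=\|T\|$, and the operator $S(z)(k)=\langle a(k),z\rangle$ would be a compact norm-preserving lift.

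The hard part will be the continuity of $a$: the assignment $k\mapsto\mu_k$ is merely Borel and $k\mapsto\mu_k|_{\partial_e F}$ is not $w^\ast$-continuous, so for a non-Bauer simplex the bare formula above need not land in $A(K,Z^\ast)$ and the construction must be adjusted. This is where metrizability of $F$ does real work: $A(F)$ is then separable, so testing $b$ against a countable norming subset of $B_Z$ replaces $b$ by a bounded family in $A(F)$ which is equicontinuous by Arzel\`a--Ascoli (since $b(F)$ is relatively compact); I would extend each member across the closed split face $F$ in a norm-preserving and mutually coherent way — the coherence being supplied by Saab's representation of $b$ as a boundary vector integral — and reassemble these scalar extensions into a single $w^\ast$-continuous affine $Z^\ast$-valued function on $K$ with relatively compact range, the same sup-norm, and the prescribed restriction to $F$. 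Once this $a$ is in hand, the reduction of the first step turns it back into the desired compact, norm-preserving lift, establishing the CQLP for $(A(K),J)$.
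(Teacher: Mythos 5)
Your reduction of the CQLP to a norm-preserving extension problem for $A(F,Z^\ast)\hookrightarrow A(K,Z^\ast)$ is exactly the paper's first step, and it is correct. The gap is in the extension step itself, and you have in effect flagged it yourself: the barycentric formula $a(k)=\int_{\partial_e F}b\,d\mu_k$ fails to produce a continuous function for a non-Bauer simplex, and your proposed repair --- ``extend each member across the closed split face $F$ in a norm-preserving and mutually coherent way \dots and reassemble these scalar extensions into a single $w^\ast$-continuous affine $Z^\ast$-valued function with the same sup-norm'' --- is a restatement of the theorem, not an argument. The ``coherence'' you appeal to is precisely the content to be proved: a family of scalar extensions of the functions $z\circ b$, $z\in Z$, must be chosen linearly in $z$, uniformly bounded by $\sup_F\|b\|$, and so that the resulting $Z^\ast$-valued map is continuous and affine on all of $K$; Saab's representation of $b$ on $F$ does not by itself supply such a selection. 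Note also that metrizability of $F$ does not make $A(F)$ separable unless $F$ itself is metrizable as a compact convex set with separable $A(F)$ --- which it is --- but equicontinuity of $\{z\circ b\}$ on $F$ says nothing about how to extend equicontinuously to $K$.

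For contrast, the paper avoids any pointwise construction. It sets $J'=\{a\in A(K,X^\ast):a(F)=0\}$ and proves two dual statements: (i) the restriction map $\Phi:A(K,X^\ast)/J'\to A(F,X^\ast)$ is a surjective isometry, the injectivity/isometry and surjectivity both being established by identifying $(J')^\bot$ with the $X^{\ast\ast}$-valued boundary measures supported on $\partial_e F$ via Saab's vector-valued Choquet theorem (this is where metrizability of $F$ is used, to make $\partial_e F$ Borel and to support boundary measures there); and (ii) $J'$ is proximinal, because $A(K,X^\ast)^\ast=(J')^\bot\oplus_1 M(F^c,X^{\ast\ast})$ exhibits $J'$ as an $M$-ideal. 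Together these give, for each $b\in A(F,X^\ast)$, some $a$ with $a|F=b$ and then a best approximant $c\in J'$ with $\|a-c\|=d(a,J')=\|b\|$, i.e.\ the norm-preserving extension. If you want to salvage your approach you would need to replace the ``coherent reassembly'' step with an argument of this duality/proximinality type (or an explicit split-face extension operator), since no direct barycentric formula will work.
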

\begin{proof}
Let $X$ be any Banach space. We first note that since $A(K)/J=A(F)$ and $F$ is a simplex, ${\mathcal K}(X,A(F))$ has the canonical identifications as $A(F,X^\ast)$ and $A(F) \otimes_{\epsilon}X^\ast$. Our theorem will be proved once we show that any $a \in A(F,X^\ast)$ has a norm-preserving extension
in $A(K,X^\ast)$.

\vskip1em
To do this we show that if $J' = \{a \in A(K,X^\ast): a(F)=0\}$, then the  map $\Phi: A(K,X^\ast)/J' \rightarrow A(F,X^\ast)$ defined by $\Phi(\pi(a))=a|F$ is a surjective isometry and for any $b \in A(F,X^\ast)$, if $\Phi(\pi(a))=b$, then there is a $c \in J'$ such that $\|a-c\|=d(a,J) = \|a|F\|$
(i.e., $J'$ is a proximinal subspace of $A(K,X^\ast)$). Thus $a-c$ is a desired norm-preserving extension.

\vskip 1em
It is easy to see that $\Phi$ is a one-one, linear contraction. Fix $\pi(a)$. Let $\Lambda \in \partial_e (J')^\bot_1$ be such that $\Lambda(a)= d(a,J')$. We next use the metrizability of $F$ and vector-valued Choquet theorem of Saab, to identify $J'$ as the space of $X^{\ast\ast}$-valued boundary measures supported on $\partial_e F$. Once this is done, $\Lambda = \delta(k)x^{\ast\ast}$ for some $k \in F$ and $x^{\ast\ast} \in \partial_e X^{\ast\ast}_1$. Thus $\Lambda(a)= d(a,J')= x^{\ast\ast}(a(k)) \leq \|a(k)\| \leq \|a|F\|$. Therefore $\Phi$ is an into isometry.

\vskip 1em
Let $\tau \in (J')^\bot$ be a boundary measure. Fix $x^\ast \in X^\ast$, for any $a \in A(K)$, $a(F)=0$, since $ax^\ast \in J$, for the scalar boundary measure $x^\ast \circ \tau$, $\int_K a d(x^\ast\circ \tau) = 0$. Now by the scalar theory, by the metrizability of $F$,  $x^\ast \circ \tau$ is supported on $\partial_e F$. Therefore, $\tau$ is supported on $F$. The other inclusion is easy to see.

\vskip 1em
Now let $\lambda$ be a boundary measure in $A(F,X^\ast)^\ast_1$ vanishing on the range of $\Phi$. Since $F$ is metrizable, we again have, $\lambda$ is supported on $\partial_e F$. Now extend $\lambda$ to a regular $X^{\ast\ast}$-valued Borel measure on $K$. Since $\partial_e F$ is a Borel set, we see that $\lambda$ is a boundary measure in $A(K,X^\ast)^\ast_1$. Also for any $a \in A(K)$, $x^\ast \in X$, $\int_{\partial_e F} a d(x^\ast \circ \lambda)=0$. Since $K$ is a simplex, we again have $\lambda =0$. Thus $\Phi$ is onto.
\vskip 1em
Let $M(F^c,X^{\ast\ast})$ denote the space of $X^{\ast\ast}$-valued boundary measures on the Borel $\sigma$-field, supported by the open convex set $F^c$.
As in the scalar-valued case, by the integral representation theorem, it is easy to see that 
$A(K,X^\ast)^\ast = (J')^\bot \bigoplus_1 M(F^c,X^{\ast\ast})$
is an $\ell^1$-direct sum. Therefore $J'$ is a $M$-ideal in the sense defined in Chapter I of \cite{hww} and hence is a proximinal subspace of $A(K,X)$ by Proposition II.1.1 of \cite{hww}. (See also \cite{R} for a simpler proof of this proposition). This completes the proof.

\end{proof}

Our next corollary is an application of the proof of the above theorem  to $A(K,X)$, to describe the state space (always considered with the weak$^\ast$-topology) of certain vectors in $A(K,X)$.
\begin{cor} Let $K$, $F$  be as in the above theorem
and let $J'=\{a \in A(K,X): a(F)=0\}$. Let $0 \neq a \in A(K,E)$ be such that $d(a,J') < \|a\|$. Then the state space, $S_a = \{ \Lambda \in A(K,X)^\ast_1: \Lambda(a) = \|a\|\}= \overline{CO}\{\delta(k)e^\ast: (\delta(k)e^\ast)(a)=e^\ast(a(k))= \|a\|~,~k \in \partial_e F^c~,e^\ast \in \partial_e X^\ast_1\}$. If $\tau \in \partial_e S_a$, then $\tau = \delta(k)e^\ast$ for some $e^\ast \in \partial_e E^\ast_1$ and an accumulation point $k$ of $F^c$.
	
\end{cor}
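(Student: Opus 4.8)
The idea is to transport to $A(K,X)$ the $\ell^1$-decomposition established in the proof of the preceding theorem and then feed it into the Krein--Milman theorem together with the hypothesis $d(a,J') < \|a\|$. First I would rerun the proof of that theorem with $X$ in place of $X^\ast$: its ingredients (the isometry $\Phi$, the scalar Choquet theory, Saab's vector-valued Choquet theorem, the $\ell^1$ direct-sum decomposition coming from the integral representation) go through with an arbitrary Banach space in the coefficient slot. This supplies the surjective isometry $\Phi : A(K,X)/J' \to A(F,X)$, $\Phi(\pi(a)) = a|F$, so that $d(a,J') = \|a|F\|$, together with the $\ell^1$-direct sum
\[
A(K,X)^\ast = (J')^\bot \oplus_1 M(F^c,X^\ast),
\]
where $(J')^\bot$ is the space of $X^\ast$-valued boundary measures carried by $\partial_e F$ (equivalently, by $F$) and $M(F^c,X^\ast)$ the space of those carried by $F^c$. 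I would also record that $S_a$ is weak$^\ast$-compact and convex --- being $A(K,X)^\ast_1$ intersected with the weak$^\ast$-closed set $\{\Lambda : \Lambda(a) = \|a\|\}$ --- and that $S_a$ is a \emph{face} of $A(K,X)^\ast_1$ (if $\Lambda = t\mu+(1-t)\nu\in S_a$ with $\mu,\nu\in A(K,X)^\ast_1$, $0<t<1$, then $\mu(a),\nu(a)\le\|a\|$ forces $\mu(a)=\nu(a)=\|a\|$).

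The crux is to show $S_a\subseteq M(F^c,X^\ast)$, i.e.\ that the distance hypothesis annihilates the $(J')^\bot$-component of every norming functional. Given $\Lambda\in S_a$, write $\Lambda=\Lambda_1+\Lambda_2$ along the decomposition, so $\|\Lambda_1\|+\|\Lambda_2\|=\|\Lambda\|\le 1$. Since $\Lambda_1$ annihilates $J'$, $|\Lambda_1(a)|=|\Lambda_1(a-c)|\le\|\Lambda_1\|\,d(a,J')$ for every $c\in J'$; hence
\[
\|a\|=\Lambda(a)\le\|\Lambda_1\|\,d(a,J')+\|\Lambda_2\|\,\|a\|\le\|a\|-\|\Lambda_1\|\bigl(\|a\|-d(a,J')\bigr),
\]
and since $\|a\|-d(a,J')>0$ this forces $\Lambda_1=0$, so $\Lambda=\Lambda_2\in M(F^c,X^\ast)$.

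It remains to read off the extreme points. Working inside $M(F^c,X^\ast)$ and arguing as in the proof of the theorem (split a boundary measure carried by $F^c$ by restricting to Borel sets, and split the vector coefficient), the extreme points of the unit ball of $M(F^c,X^\ast)$ are exactly the point masses $\delta(k)e^\ast$ with $k\in\partial_e K\cap F^c$ and $e^\ast\in\partial_e X^\ast_1$. Since $S_a$ is a face of $A(K,X)^\ast_1$ and is contained in that ball, it is a face of it, so $\partial_e S_a\subseteq\partial_e M(F^c,X^\ast)_1$; combining this with the norming condition $(\delta(k)e^\ast)(a)=e^\ast(a(k))=\|a\|$ shows every $\tau\in\partial_e S_a$ has the form $\delta(k)e^\ast$ with $k\in\partial_e K\cap F^c$, $e^\ast\in\partial_e X^\ast_1$, $e^\ast(a(k))=\|a\|$. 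Conversely each such functional has norm $\le 1$ and sends $a$ to $\|a\|$, so it lies in $S_a$; hence Krein--Milman, $S_a=\overline{CO}(\partial_e S_a)$, together with these two inclusions yields the asserted description of $S_a$. The last statement is the forward inclusion above, upon noting that the hypothesis makes $F$ a proper nonempty closed face, so $F^c$ is a nonempty relatively open subset of a convex set with more than one point and hence has no isolated points; thus the $k$ occurring is an accumulation point of $F^c$. The most delicate point is this last step --- identifying the extreme points of the ball of $M(F^c,X^\ast)$ and keeping the support bookkeeping ($k\notin F$) clean --- whereas the first two steps are essentially a reuse of the previous theorem's proof followed by a one-line estimate.
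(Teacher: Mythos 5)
Your proposal is correct and follows the paper's route in all essentials: transport the $\ell^1$-decomposition $A(K,X)^\ast=(J')^\bot\oplus_1 M(F^c,X^\ast)$ from the preceding theorem, use the fact that $S_a$ is a weak$^\ast$-compact face of $A(K,X)^\ast_1$ to place its extreme points among the point masses $\delta(k)e^\ast$ supported off $F$, and finish with Krein--Milman. Two local steps are executed differently, and in each case your version is arguably tighter. First, where the paper only argues at the level of extreme points (an extreme point of the ball lying in $(J')^\bot$ would satisfy $\lambda(a)\le d(a,J')<\|a\|$ and so cannot norm $a$), your quantitative estimate $\|a\|\le\|\Lambda_1\|\,d(a,J')+\|\Lambda_2\|\,\|a\|$ shows that \emph{all} of $S_a$, not just $\partial_e S_a$, lies in $M(F^c,X^\ast)$; this is a strictly stronger intermediate fact obtained at no extra cost. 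Second, for the final assertion the paper invokes Milman's converse of the Krein--Milman theorem and a net-compactness argument, which by itself only places $k$ in the closure of $F^c$; you instead note that $k\notin F$ is already known from the extreme-point analysis and that $F^c$, being a nonempty relatively open subset of a compact convex set with more than one point, has no isolated points, so $k$ is automatically an accumulation point of $F^c$. Both arguments reach the stated conclusion; yours avoids the detour through nets. (Minor caveat, affecting the paper equally: the identification of $\partial_e A(K,X)^\ast_1$ with the functionals $\delta(k)e^\ast$ is asserted rather than proved in both treatments, and the degenerate case of an empty face $F$ is implicitly excluded.)
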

\begin{proof}
Clearly the set on the R. H. S, when non-empty, is contained in $S_a$. Since $S_a$ is a face, if $\lambda \in \partial_e S_a$, then $\lambda \in \partial_e A(K,X^\ast)_1$. In view of the $\ell^1$-decomposition, similar to the one, given at the end of the above Theorem, applied now to $A(K,X)^\ast$,  as $d(a,J) < \|a\|$, we get $\lambda \in \partial_e M(F^c,X^\ast)_1$. As before, it is easy to see that $\lambda = \delta(k)e^\ast$ for some $k\notin F$   and $e^\ast \in \partial_e X^\ast_1$ (in particular, the set in the R. H. S in the formula is non-empty). Thus by an application of the Krein-Milman theorem, the first conclusion follows.
\vskip 1em
If $\tau \in \partial_e S_a$, then by Milman's converse of the Krein-Milman theorem, and using the compactness of $K$ and $X^\ast_1$,  there are nets $\{k_{\alpha}\}_{\alpha \in \Delta} \subset F^c$ and $\{e^\ast_{\alpha}\}_{\alpha \in \Delta} \subset \partial_e X^\ast_1$, $\delta(k_{\alpha})e^\ast_{\alpha} \rightarrow \tau$ in the weak$^\ast$-topology,  which we may and do assume  are also such that $k_{\alpha} \rightarrow k$ and $e^\ast_{\alpha} \rightarrow e^\ast$, for some $k \in K$ and $e^\ast \in X^\ast_1$. 
\vskip 1em
Thus $\tau = \delta(k)e^\ast$. As $\|\tau\| = 1$, we have, $\|e^\ast\|=1$. Since $\tau \in \partial_e A(K,X)^\ast_1$, it is easy to see that $e^\ast \in \partial_e X^\ast_1$.
\end{proof}
We do not know how to prove compact and order-preserving lifting theorems in the category of 
order unit spaces or lattices $X$, and $A(K,X^\ast)$ and for subspaces of the type $J$ as above.
\vskip 1em

	A particularly interesting situation occurs by taking $X = A(K_2)$ for a  Choquet simplex $K_2$. In the following theorem we again get a vector-valued, this time non-negative extension theorem. We only present an outline proof of this result, omitting the technical details.
	\begin{thm}
		Let $K_1,K_2$ be simplexes . Let $F \subset K_1$ be a closed face. Let $f \in A(F,A(K_2))$ be a non-negative, affine continuous function. $f$ has a non-negative, norm-preserving extension in $A(K_1,A(K_2))$.
	\end{thm}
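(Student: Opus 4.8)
The plan is to reduce this vector-valued statement to the classical scalar extension theorem for closed faces of a simplex, by realising $A(K_1,A(K_2))$ as the affine function space of an auxiliary simplex. First I would note that $A(K_1,A(K_2))$, equipped with the pointwise order and the constant function $k_1\mapsto 1_{K_2}$ as order unit, is a complete order unit space, so by Kadison's representation theorem it is isometrically and order-isomorphically $A(K_3)$, where $K_3$ is its state space in the weak$^\ast$-topology. Since $K_1$ is a simplex and $F$ a closed face, $F$ is itself a simplex, and the same remark identifies $A(F,A(K_2))$ with $A(\widetilde F)$ for the state space $\widetilde F$ of $A(F,A(K_2))$; under these identifications the given non-negative $f$ becomes a non-negative element of $A(\widetilde F)$, and the restriction map $A(K_1,A(K_2))\to A(F,A(K_2))$ becomes a map $A(K_3)\to A(\widetilde F)$.

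The technical heart — which I would only outline — is to check that $K_3$ and $\widetilde F$ are Choquet simplexes and that $\widetilde F$ embeds in $K_3$ as a closed face realising $A(\widetilde F)$ as the order-unit quotient $A(K_1,A(K_2))/J$, $J=\{a\in A(K_1,A(K_2)):a|F=0\}$, along the restriction map. For this I would use the metric approximation property of $A(K_1)$ — exactly as in the tensor identifications made before Theorem 7 — together with the theory of tensor products of Choquet simplexes: $A(K_1)^\ast$ and $A(K_2)^\ast$ are $L^1$-spaces, and since $L^1\widehat{\otimes}_\pi L^1$ is again an $L^1$-space, the appropriate completed tensor product of $A(K_1)$ and $A(K_2)$ has an $L$-space as its dual, which is precisely the assertion that $A(K_1,A(K_2))=A(K_3)$ is a vector lattice, i.e. that $K_3$ is a simplex (equivalently, one verifies that $A(K_1,A(K_2))$ has the Riesz decomposition property, using affine continuous selections). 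The same computation over $F$ identifies the predual of $A(F,A(K_2))$ with the $L$-summand of the predual of $A(K_1,A(K_2))$ ``supported over $F$''; dualising, $J$ is an $M$-ideal, the restriction map is onto, and $\widetilde F$ is a closed split face of $K_3$ with $A(K_3)/J\cong A(\widetilde F)$ isometrically and as ordered spaces.

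Granting this, the conclusion is immediate from the classical extension theorem for simplexes (Alfsen \cite{A}, Chapter II): restriction of affine continuous functions from a simplex to a closed face is a quotient map carrying the positive cone onto the positive cone, so the non-negative $f\in A(\widetilde F)=A(K_3)/J$ lifts to a non-negative $\tilde f\in A(K_3)=A(K_1,A(K_2))$ with $\tilde f|F=f$ and $\|\tilde f\|=\|f\|$. Equivalently, once the restriction map is known to be onto, lift $f$ to any non-negative $g\in A(K_1,A(K_2))$ and set $\tilde f=g\wedge(\|f\|\,\mathbf 1)$ in the lattice $A(K_1,A(K_2))$: as the restriction map is a lattice homomorphism fixing the order unit, $\tilde f|F=f$, while $0\le\tilde f\le\|f\|\,\mathbf 1$ yields both $\tilde f\ge 0$ and $\|\tilde f\|=\|f\|$.

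The main obstacle is thus entirely the structural step: proving that the ``simplex product'' $K_3$ of $K_1$ and $K_2$ is a simplex and that a closed face $F$ of $K_1$ induces a closed split face of $K_3$ carrying the expected affine function space. Everything after that is the scalar Choquet theory of simplexes; in particular, and in contrast with Theorem 7, no metrizability of $F$ is needed, since both $L^1\widehat{\otimes}_\pi L^1=L^1$ and the scalar extension theorem for closed faces of simplexes hold without it.
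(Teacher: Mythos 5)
Your proposal follows essentially the same route as the paper: identify $A(K_1,A(K_2))$ order-isometrically with $A(K')$ for the Namioka--Phelps tensor-product simplex $K'$ of $K_1$ and $K_2$, match $F$ with the closed (split) face of $K'$ corresponding to the tensor product of $F$ and $K_2$, and then invoke the scalar positive norm-preserving extension theorem for closed faces of simplexes from Alfsen; like the paper, you leave the structural tensor-product step as an outline. One caveat on your ``equivalently'' alternative: $A(K')$ for a general Choquet simplex has the Riesz decomposition property but is \emph{not} a vector lattice, so the truncation $g\wedge(\|f\|\,\mathbf 1)$ is not available -- but this does not affect your main argument via the classical extension theorem.
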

	
	\begin{proof}
	It can be shown using the analysis carried out above and the tensor product theory of simplexes developed by I. Namioka and R. R. Phelps, in \cite{NP}, that $A(K_1,A(K_2))$ is isometrically and in order preserving way identified with $A(K')$ for a unique (upto affine homeomorphisms) simplex $K'$( formally called the tensor product of $K_1,K_2$). Now if $F$ is a closed face of $K_1$ then there is a closed face $G$ of $K'$, which corresponds to the tensor product of the simplexes $F,K_2$. Thus the vector-valued extension theorem is reduced to extending a non-negative affine continuous function on $G$ to a non-negative affine continuous function on $K'$ with the same norm, which in the scalar case was done in Chapter 3 of \cite{A}. 
\end{proof}
We next give another application of lifting lattice homomorphisms to a geometric notion.
\vskip 1em

Given  a compact set $\Omega$, a positive regular Borel measure $\mu$ is called strictly positive, if $\mu(U)>0$,
for any non-empty open set $U\subset \Omega$. For example, if $\Omega$ is also separable, with a dense set $\{\omega_n\}_{n \geq 1}$, then $\mu =\sum _1^\infty \frac{1}{2^n}\delta(\omega_n))$ is a strictly positive regular  measure. As before, we denote by $\delta(w)$ the point evaluation functional at $w$ or the Dirac measure concentrated in $w$.
\vskip 1em
We also recall the definition of uniform rotundity  in every direction (URED).
\begin{Def}\label{URED}
A norm on a space $X$  is said to be URED if whenever $\{x_n\}_{n\geq 1}$ and $\{y_n\}_{n\geq 1}$ are two sequences of unit vectors satisfying the the following two conditions:\\
\begin{itemize}
\item $\lim_n \|\frac{x_n+y_n}{2}\| =1;$
\item there exists $z \in X$  and scalar $r_n$ such that $x_n-y_n=r_n z,$ for all $n \in \mathbb{N},$
\end{itemize}
then $lim_n\|x_n-y_n\|=0.$
\end{Def}
If the second hypotheses on the definition is removed then we have the normal uniform rotundity.
The main theorem from \cite{at} implies that $C(\Omega)$ admitting an equivalent URED norm is equivalent to $\Omega$ to supporting a strictly positive measure.

\begin{thm}
Let $\Omega$ be a compact space such that $C(\Omega)$ admits an equivalent URED norm. Let $E \subset \Omega$ be a closed set and let $J = \{f \in C(\Omega): f(E)=0\}$. Suppose for every Banach lattice $X$, every bounded linear lattice homomorphism, $T: X \rightarrow C(\Omega)/J$ has a norm-preserving linear lifting $T'$, which is also a lattice homomorphism.	 Then $C(\Omega)/J$ has an equivalent URED norm.
\end{thm}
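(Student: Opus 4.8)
The plan is to collapse the whole statement, via the Aviles--Troyanski theorem of \cite{at}, to a transfer of a strictly positive measure from $\Omega$ to $E$. Since $C(\Omega)$ admits an equivalent URED norm, that theorem provides a strictly positive regular Borel measure $\mu$ on $\Omega$. As already noted in Example \ref{ex?}, the restriction map identifies $C(\Omega)/J$ isometrically (and as a Banach lattice) with $C(E)$, the quotient map $\pi$ becoming $f\mapsto f|_E$; moreover URED is a property of the norm alone (Definition \ref{URED}), so it is transported across this isometry. Hence it is enough to exhibit a strictly positive regular Borel measure on $E$: applying \cite{at} to $C(E)$ then yields the desired URED renorming of $C(\Omega)/J$.

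I would invoke the hypothesis for a single Banach lattice and a single operator, namely $X=C(E)=C(\Omega)/J$ and $T=\mathrm{id}$, which is trivially a bounded lattice homomorphism. This produces a lattice homomorphism $I'\colon C(E)\to C(\Omega)$ with $\pi\circ I'=\mathrm{id}$, i.e. $I'(g)|_E=g$ for every $g\in C(E)$. The two features of $I'$ that matter are that it is positive (being a lattice homomorphism) and that it is a genuine lift; norm preservation is used only to know $I'$ is bounded, so that the functional introduced next is finite.

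Next I would define a positive linear functional on $C(E)$ by $g\mapsto \int_\Omega I'(g)\,d\mu$; by the Riesz representation theorem it is given by $\int_E g\,d\nu$ for a finite positive regular Borel measure $\nu$ on $E$ (finiteness follows from $\nu(E)=\int_\Omega I'(1)\,d\mu\le \|I'(1)\|\,\mu(\Omega)<\infty$). To see that $\nu$ is strictly positive, fix a nonempty relatively open set $V\subseteq E$, pick $\omega_0\in V$, and use Urysohn's lemma to choose $g\in C(E)$ with $0\le g\le 1$, $g(\omega_0)=1$ and $g$ supported in $V$. Then $I'(g)\ge 0$ on $\Omega$ and $I'(g)(\omega_0)=g(\omega_0)=1$ since $\omega_0\in E$ and $I'(g)|_E=g$; by continuity $I'(g)>\frac12$ on a nonempty open set $U\subseteq\Omega$, and therefore $\nu(V)\ge\int_E g\,d\nu=\int_\Omega I'(g)\,d\mu\ge\frac12\,\mu(U)>0$ by the strict positivity of $\mu$. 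Thus $\nu$ is strictly positive on $E$, and the proof concludes as in the first paragraph.

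The only real content lies in constructing $\nu$ and checking its strict positivity, and the one place where something could go wrong is ensuring that the push-forward of $\mu$ through the lift remains strictly positive; this is precisely where both properties of $I'$ are needed. Positivity of $I'$ is what makes $\nu$ a positive measure, while the lifting identity $I'(g)|_E=g$ is what prevents $I'(g)$ from vanishing at the peak point $\omega_0\in V$, which in turn forces the integral against the strictly positive $\mu$ to be positive. (Incidentally, the full lattice-homomorphism structure of $I'$ is not used — only its positivity — although one does use that $\mathrm{id}_{C(E)}$ is a lattice homomorphism in order to invoke the hypothesis at all.)
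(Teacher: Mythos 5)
Your proof is correct, and its overall skeleton coincides with the paper's: identify $C(\Omega)/J$ with $C(E)$, lift the identity to a lattice homomorphism $I'\colon C(E)\to C(\Omega)$, obtain a strictly positive $\mu$ on $\Omega$ from the Aviles--Troyanski theorem, transfer it to a strictly positive measure on $E$, and apply that theorem again. The one genuine difference is in how the transfer is carried out. The paper invokes the structure theory of scalar-valued lattice homomorphisms on $C(K)$-spaces to produce a continuous map $\psi=(I')^\ast\circ\delta\colon\Omega\to E$ which is the identity on $E$, and takes $\nu$ to be the push-forward $\mu\circ\psi^{-1}$; strict positivity of $\nu$ then comes from $\psi^{-1}(V)$ being a nonempty open subset of $\Omega$ for each nonempty relatively open $V\subseteq E$. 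You instead define $\nu$ as the image of $\mu$ under $(I')^\ast$ directly, i.e.\ via $g\mapsto\int_\Omega I'(g)\,d\mu$, and verify strict positivity by a Urysohn peak-function argument that uses only two facts: $I'\geq 0$ and $I'(g)|_E=g$. The two constructions produce essentially the same measure, but your verification is more elementary and self-contained (no representation of lattice homomorphisms as weighted composition operators), and it makes visible that only \emph{positivity} of the lift is needed, not the full lattice-homomorphism property --- a genuine weakening of the hypothesis actually used. What the paper's route buys in exchange is the extra structural conclusion that $\Omega$ admits a continuous retraction onto $E$; also note that the weight $I'(1)$ need not be identically $1$ off $E$, so the paper's claim that $(I')^\ast\circ\delta$ maps $\Omega$ \emph{into} $E$ requires a small additional continuity argument that your version sidesteps entirely.
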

\begin{proof}
As before we use the canonical identification of the space $C(\Omega)/J$ as $C(E)$. Hence by hypothesis, the identity map $I: C(E) \rightarrow C(\Omega)/J=C(E)$ admits a linear norm-preserving lifting $I': C(E) \rightarrow C(\Omega)$ which is also a lattice homomorphism. Now by the structure of scalar-valued lattice homomorphisms on continuous function spaces, if $\phi: \Omega \rightarrow C(\Omega)^\ast_1$ is the canonical embedding, then $(I')^\ast \circ \phi: \Omega \rightarrow E$ is a continuous map which is the identity on $E$.

\vskip 1em
Now by the main result of \cite{at}, the existence of an equivalent URED norm implies that there is a strictly positive measure $\mu$ supported on $\Omega$. Now $\nu = \mu \circ\phi^{-1} $ is a strictly positive measure supported on $E$. Therefore by the theorem from \cite{at} again $C(E)=C(\Omega)/J$ admits an equivalent URED norm.
\end{proof}
\begin{rem}
As before the lattice lifting assumption implies the existence of a linear lattice, contractive projection $P: C(\Omega) \rightarrow C(\Omega)$ such that $ker(P) = J$. 
\end{rem}

\vskip 1em

	The author thanks Professors F. Botelho and R. J. Fleming for providing the initial impetus for this work. Thanks are also due to the Editorial team for the prompt handling of this article during the pandemic. The author's first article on Choquet simplexes appeared in 1982 and over four decades they continue to provide nice illustrations for Banach space theoretic concepts.

\end{document}